\begin{document}

%\linenumbers

\begin{frontmatter}

\titledata{The Erd\H{o}s--Faber--Lov\'{a}sz Conjecture revisited}{}

\authordatatwo{John Baptist Gauci}{john-baptist.gauci@um.edu.mt}{}{Jean Paul Zerafa}{zerafa.jp@gmail.com}{The research work disclosed in this publication is funded by the ENDEAVOUR Scholarship Scheme (Malta). The scholarship may be part-financed by the European Union -- European Social Fund (ESF) under Operational Programme II -- Cohesion Policy 2014--2020, ``Investing in human capital to create more opportunities and promote the well being of society".}{Department of Mathematics, University of Malta, Malta}

\keywords{Erd\H{o}s--Faber--Lov\'{a}sz Conjecture, chromatic number, clique-decomposition, edge-colouring}
\msc{05C15}

\begin{abstract}
The Erd\H{o}s--Faber--Lov\'{a}sz Conjecture, posed in 1972, states that if a graph $G$ is the union of $n$ cliques of order $n$ (referred to as defining $n$-cliques) such that two cliques can share at most one vertex, then the vertices of $G$ can be properly coloured using $n$ colours. Although still open after almost 50 years, it can be easily shown that the conjecture is true when every shared vertex belongs to exactly two defining $n$-cliques. We here provide a quick and easy algorithm to colour the vertices of $G$ in this case, and discuss connections with clique-decompositions and edge-colourings of graphs. 
\end{abstract}
\end{frontmatter}
\section{Introduction}\label{section intro}

For any graph $G$, the \emph{chromatic number} $\chi(G)$ is the least number of colours that are required to colour all the vertices of $G$ such that no two adjacent vertices receive the same colour. In $1972$, Erd\H{o}s, Faber, and Lov\'{a}sz posed the following conjecture, whilst at a tea party in Boulder Colorado.

\begin{conjecture}[EFL Conjecture] \cite{Erdos1981}  \label{EFLConjecture}
If a graph $G$ is the union of $n$ cliques of order $n$, no two of which share more than one vertex, then $\chi(G)=n$. 
\end{conjecture}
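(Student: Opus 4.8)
The lower bound is immediate: any one of the defining $n$-cliques is a copy of $K_n$, forcing $\chi(G)\ge n$. The whole content is therefore the upper bound $\chi(G)\le n$, and the plan is to exhibit a proper $n$-colouring. It helps to split the vertices into \emph{private} vertices, lying in a single defining clique, and \emph{hubs}, lying in two or more. A private vertex has exactly $n-1$ neighbours (the rest of its clique), so against a palette of size $n$ it can always be coloured greedily \emph{last}; a hub lying in $d$ cliques may have degree up to $d(n-1)$, which can dwarf $n$. Since colouring a clique amounts to choosing a bijection from its $n$ vertices to $\{1,\dots,n\}$, and cliques sharing a hub must agree on its colour, the entire difficulty is concentrated on assigning colours to the hubs consistently.

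My first move is to pass to the \emph{edge-colouring reformulation} flagged in the abstract. Build a hypergraph $H$ whose vertices are the $n$ defining cliques and whose edges are the hubs, each hub becoming the set of cliques through it (of size $\ge 2$). Because two cliques share at most one hub, $H$ is \emph{linear}; two hubs lying in a common clique meet at that vertex of $H$. A proper $n$-colouring of $G$ restricted to the hubs is then exactly a proper edge-colouring of $H$ with $n$ colours, and at each clique the colours not used on its hubs are filled bijectively by its private vertices (the counts match, since hubs plus private vertices equal $n$). Hence the conjecture is equivalent to: \emph{every linear hypergraph on $n$ vertices has chromatic index at most $n$}. In this language the special case treated elsewhere in this paper is precisely the case where every hub lies in exactly two cliques, i.e.\ where $H$ is a simple graph; there $\Delta(H)\le n-1$ and Vizing's theorem gives $\chi'(H)\le\Delta(H)+1\le n$. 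The full conjecture is the genuinely hypergraphic statement, for which no exact analogue of Vizing's theorem exists.

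For the upper bound in general I would follow the semi-random (``nibble'') plus absorption paradigm. Reserve in advance a sub-palette of size $o(n)$ together with a small \emph{absorbing} structure; colour the overwhelming majority of the hubs in successive random rounds, using concentration tools (Azuma's inequality and the Lov\'{a}sz Local Lemma) to keep, with high probability, every still-uncoloured vertex supplied with strictly more available colours than it has uncoloured neighbours. The reserved palette and absorbing set are then spent to finish the leftover vertices and the densely shared core without ever calling on an $(n+1)$-st colour, after which the private vertices are filled in greedily. Two reassurances guide the calibration: the \emph{fractional} relaxation is known to equal $n$ exactly, and Kahn's theorem already yields $\chi(G)\le n+o(n)$, so the task is to drive this additive error term to zero.

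The main obstacle is exactly that: attaining the value $n$ with no slack \emph{for every} $n$. Near-extremal configurations --- for example those built from projective planes, where many cliques meet at common hubs and every colour must be used to capacity --- leave no room for waste, so the random rounds and the absorption step have to be engineered to be exactly, not merely approximately, efficient. I do not expect an elementary self-contained argument to reach this; realistically the method above delivers the conclusion only for all sufficiently large $n$, and a proof valid for \emph{all} $n$ --- rather than the asymptotic and restricted results presently available, including the two-hub special case established in this paper --- remains the crux of why the conjecture has resisted attack for nearly fifty years.
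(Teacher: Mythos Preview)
The statement in question is a \emph{conjecture}, not a theorem, and the paper does not prove it; the paper only establishes the special case in which every shared vertex lies in exactly two defining $n$-cliques (Theorem~\ref{Theorem EFL algorithm2}), and otherwise treats the full statement as open (pointing to the asymptotic result of Kang--Kelly--K\"uhn--Methuku--Osthus for large $n$). Your proposal is therefore not to be compared against a proof in the paper, because there is none.

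That said, your write-up is an accurate and well-informed survey of the state of the art rather than a proof: you correctly reduce the problem to colouring the hubs, give the linear-hypergraph edge-colouring reformulation (exactly the viewpoint the paper adopts in Section~\ref{section last}), dispose of the two-hub case via Vizing (as the paper does in the introduction), and sketch the nibble-plus-absorption programme behind the large-$n$ result. You also explicitly acknowledge that this programme yields the conclusion only for sufficiently large $n$ and that the conjecture for all $n$ remains open. So there is no gap to name beyond the one you already flag yourself: the method you outline does not, and is not claimed to, deliver $\chi(G)\le n$ for every $n$; the near-extremal projective-plane instances you mention are precisely where the slackless absorption becomes delicate, and no argument presently closes that for small $n$.
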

For ``a proof or disproof" of the conjecture, Paul Erd\H{o}s initially offered $50$USD, but then, having seen that the problem is not so trivial and simple as it seems, he increased his offer to $500$USD. To this day, no complete solution of this conjecture exists. We refer the reader to \cite{efl0,efl1,efl2,efl3,efl4,efl5} for a more thorough introduction to the conjecture and recent results. In particular, we remark that the fractional version of the EFL Conjecture was solved by Kahn and Seymour \cite{KahnSeymour1992} in 1992. Moreover, in January 2021, it was announced \cite{kuhn} that the conjecture is true for sufficiently large values of $n$, which to our knowledge, is the best result so far in trying to attack the EFL Conjecture. 

For every positive integer $n$, let $\mathbb{EFL}_{n}$ denote the class of graphs that are constructed as the union of $n$ cliques $Q_{1}, \ldots, Q_{n}$ each of order $n$, such that any two of these $n$ cliques intersect in at most one vertex. The $n$-cliques $Q_{1}, \ldots, Q_{n}$ are referred to as the \emph{defining} $n$-cliques, and a vertex which belongs to more than one defining $n$-clique is said to be \emph{shared}. Conjecture \ref{EFLConjecture} is equivalent to saying that if $G \in \mathbb{EFL}_{n}$, then $\chi(G)=n$. To avoid the trivial case when $n=1$, we tacitly assume that $n\geq 2$. We also remark that since a graph $G$ belonging to $\mathbb{EFL}_{n}$ contains $n$-cliques, $\chi(G)\geq n$, and so in order to prove that $\chi(G)=n$ it suffices to provide a proper $n$-colouring of the vertices of $G$.

Let $\mathcal{G}$ be in $\mathbb{EFL}_{n}$ such that every shared vertex belongs to \textbf{exactly} two defining $n$-cliques of $\mathcal{G}$. 
It is known that, in this case, $\mathcal{G}$ admits a proper $n$-vertex-colouring by using clique-decompositions and edge-colourings as described in the following (for a more detailed discussion, the reader is referred to Section \ref{section last}). In fact, consider the complete graph $K_{n}$ on $n$ vertices, with each vertex of $K_{n}$ corresponding to a defining $n$-clique of $\mathcal{G}$ and with every shared vertex of $\mathcal{G}$ corresponding to an edge of $K_{n}$. By Vizing's Theorem, the edges of $K_{n}$ can be properly coloured using at most $n$ colours, and consequently, the shared vertices of $\mathcal{G}$ can also be properly coloured using at most $n$ colours. This colouring is then extended to a proper $n$-colouring of all the vertices of $\mathcal{G}$, as follows. Every defining $n$-clique of $\mathcal{G}$ contains at most $n-1$ shared vertices, which by the above are all coloured differently. Let $\mathcal{C}_i$ denote the set of colours of the shared vertices belonging to the defining $n$-clique $Q_{i}$. Since the number of unshared vertices of $Q_{i}$ is equal to $n-|\mathcal{C}_i|$, the unshared vertices of $Q_{i}$ can be assigned mutually different colours from $\{\,1,\ldots, n\,\}-\mathcal{C}_i$, yielding a proper $n$-vertex-colouring of $\mathcal{G}$. Consequently, $\chi(\mathcal{G})=n$, and hence, the EFL Conjecture holds for this particular instance. 

The aim of this note is to present a very simple and straightforward algorithm that enables the construction of a proper colouring of the vertices of $\mathcal{G}$ with $n$ colours, when every shared vertex belongs to exactly two defining $n$-cliques, without having the need to first colour the edges of the corresponding complete graph $K_n$ and then transferring this colouring to $\mathcal{G}$ (as described above). 
We first consider the special case when the number of shared vertices in $\mathcal{G}$ achieves the maximum value ${n \choose 2}$, that is, when every two defining $n$-cliques share a vertex and every shared vertex belongs to exactly two defining $n$-cliques. Lemma \ref{lemma main} gives the algorithm for the case when $n$ is even, and then with the help of Example \ref{Example EFL mod}, this is generalised to the case when $n$ is odd (see Proposition \ref{prop EFL algorithm1}). Finally, in Theorem \ref{Theorem EFL algorithm2}, we discuss how this algorithm can be utilised in the case when the number of shared vertices is less than $\binom{n}{2}$. In fact, our algorithm properly colours the shared vertices of a graph $\mathcal{G}\in\mathbb{EFL}_{n}$ (in which every shared vertex belongs to exactly two defining $n$-cliques) using at most $n$ colours. One can then extend this colouring to a proper $n$-colouring of all the vertices of $\mathcal{G}$, as explained above. In Section \ref{section last} we discuss a very natural reformulation of the EFL Conjecture, suggested in \cite{efl0}, in terms of clique-decompositions and edge-colourings. We end this note by suggesting Problem \ref{problem} which although we believe is captivating in itself, we think it can shed some further light on an eventual complete solution of the EFL Conjecture.

Although most of our terminology is standard, we refer the reader to \cite{BM} for further definitions and notation not explicitly stated.

\section{Main result}\label{section main}
For $i$ and $j$ in $\{\,1,\ldots,n\,\}$ and $i<j$, if the defining $n$-cliques $Q_{i}$ and $Q_{j}$ have a shared vertex, then this shared vertex is denoted by the ordered pair $(i,j)$. Let $\mathcal{G}_{n}$ be the unique graph in $\mathbb{EFL}_{n}$ having $\binom{n}{2}$ shared vertices, and let $\mathcal{V}_{n}\subset V(\mathcal{G}_{n})$ be the set of shared vertices of $\mathcal{G}_{n}$, that is, $\vert \mathcal{V}_{n}\vert=\binom{n}{2}$. In this case every shared vertex belongs to exactly two defining $n$-cliques. We remark that in what follows the complete residue system used when taking operations modulo $t$ is $\{\,1,\ldots, t\,\}$. We first consider the case when $n$ is an even integer.

\begin{lemma}\label{lemma main}
Let $n\geq 2$ be an even integer. The function $c:\mathcal{V}_{n}\rightarrow \{\,1,\ldots, n-1\,\}$, such that 
\begin{linenomath}
$$c\big((i,j)\big)\equiv\left\{
\begin{array}{rl}
i+j\pmod{n-1} & $if $j<n,\\
2i\pmod{n-1} & $if $j=n,
\end{array}\right.
$$
\end{linenomath}
is a proper $(n-1)$-colouring of the vertices in $\mathcal{V}_{n}$. 
\end{lemma}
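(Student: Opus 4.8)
The plan is to use the fact that, by construction, two distinct shared vertices $(i,j)$ and $(k,l)$ of $\mathcal{G}_{n}$ are adjacent precisely when $\{i,j\}\cap\{k,l\}\neq\emptyset$: indeed $(i,j)$ belongs to $Q_{i}$ and to $Q_{j}$, so two shared vertices lie in a common defining $n$-clique exactly when their index pairs meet. Hence the subgraph of $\mathcal{G}_{n}$ induced on $\mathcal{V}_{n}$ is the line graph of $K_{n}$, every edge of which lies in one of the ``stars'' $S_{m}=\{\,(i,j)\in\mathcal{V}_{n}:m\in\{i,j\}\,\}$ for $m\in\{\,1,\ldots,n\,\}$. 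Since $|S_{m}|=n-1$ equals the number of available colours, showing that $c$ is a proper $(n-1)$-colouring is equivalent to showing that, for each $m$, the restriction of $c$ to $S_{m}$ is a bijection onto $\{\,1,\ldots,n-1\,\}$. Note also that, as we always reduce modulo $n-1$ into the residue system $\{\,1,\ldots,n-1\,\}$, the map $c$ automatically has the stated codomain, so nothing extra need be checked there.

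First I would handle $m=n$. Here $S_{n}=\{\,(1,n),(2,n),\ldots,(n-1,n)\,\}$ and $c\big((i,n)\big)\equiv 2i\pmod{n-1}$. As $n$ is even, $n-1$ is odd, so $\gcd(2,n-1)=1$ and $x\mapsto 2x\pmod{n-1}$ permutes $\{\,1,\ldots,n-1\,\}$; thus $c$ is a bijection on $S_{n}$. This elementary parity remark is the one genuinely load-bearing observation, and it is exactly what forces the odd case to be treated separately later.

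Next I would fix $m\in\{\,1,\ldots,n-1\,\}$. The vertices in $S_{m}$ with both indices at most $n-1$ are the $(i,m)$ with $i<m$ and the $(m,j)$ with $m<j\leq n-1$; each of these has second index strictly less than $n$, so its colour is $m+x\pmod{n-1}$, where $x$ denotes the index other than $m$. As $x$ ranges over $\{\,1,\ldots,n-1\,\}\setminus\{m\}$ these colours are pairwise distinct (translation by $m$ is a bijection of the complete residue system) and together they constitute $\{\,1,\ldots,n-1\,\}$ with exactly the value $2m\pmod{n-1}$ deleted, namely the value that $x=m$ would have contributed. The one remaining vertex of $S_{m}$ is $(m,n)$, whose colour is $2m\pmod{n-1}$, restoring precisely this missing colour. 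Hence $c$ is injective, and therefore bijective, on $S_{m}$.

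Combining the two cases shows $c$ is a proper $(n-1)$-colouring of $\mathcal{V}_{n}$. The only real work lies in the bookkeeping of the second case — verifying that the ``shift'' family of colours omits exactly the colour supplied by the vertex $(m,n)$ — together with the parity fact used for $m=n$; I do not expect either case to pose a serious obstacle. It may be worth remarking that this argument is nothing more than the classical ``circle method'' proper edge-colouring of $K_{n}$ (for $n$ even) re-expressed as a vertex-colouring of its line graph, which is consistent with the connection to Vizing's Theorem recalled in the Introduction.
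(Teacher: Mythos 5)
Your proof is correct, but it takes a genuinely different route from the paper. The paper argues by contradiction with a five-way case analysis: it assumes two adjacent shared vertices $(i,j)$ and $(k,\ell)$ receive the same colour and, depending on which indices coincide and whether an index equals $n$, derives $i\equiv k$, $j\equiv \ell$ or $i\equiv j \pmod{n-1}$, a contradiction in each case. You instead observe that adjacency among shared vertices is exactly intersection of index pairs, so that each star $S_{m}$ induces a clique of size $n-1$, and you prove the stronger statement that $c$ restricted to each $S_{m}$ is a bijection onto $\{\,1,\ldots,n-1\,\}$ --- using $\gcd(2,n-1)=1$ for $m=n$, and for $m<n$ the fact that the translates $m+x$ over $x\neq m$ miss precisely the colour $2m$ supplied by $(m,n)$. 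Both arguments are sound and rest on the same arithmetic facts (invertibility of translation, and of doubling modulo the odd number $n-1$), but yours is structurally cleaner: it avoids the case split over which coordinates coincide, it makes the connection to the classical round-robin edge-colouring of $K_{n}$ explicit, and the bijectivity on each star immediately explains the paper's subsequent remark that every defining $n$-clique sees all $n-1$ colours on its shared vertices, so the extension to a proper $n$-colouring of $\mathcal{G}_{n}$ by giving the unshared vertices a new colour is automatic. The paper's case analysis, by contrast, is more elementary and self-contained, needing no reformulation of the adjacency structure.
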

\begin{proof}
Let $(i,j)$ and $(k,\ell)$ be two adjacent vertices. It suffices to show that $c\big((i,j)\big)\neq c\big((k,\ell)\big)$. For contradiction, we suppose that $c\big((i,j)\big)= c\big((k,\ell)\big)$. We first note that exactly one of $i$ and $j$ has to be equal to exactly one of $k$ and $\ell$. There are five cases that need to be considered.\medskip

\noindent\textbf{Case 1.} $i=k$ and either $j<\ell<n$ or $\ell<j<n$.

Notwithstanding whether $j<\ell<n$ or $\ell<j<n$, we have $i+j\equiv k+\ell\equiv i+\ell\pmod{n-1}$, implying that $j\equiv \ell\pmod{n-1}$, a contradiction.\,\,\,{\tiny$\blacksquare$} \medskip

%Without loss of generality, let $j<\ell$. Then, $i+j\equiv k+\ell\equiv i+\ell\pmod{n-1}$. Thus, for some integer $\lambda_{1}\geq 1$, we have $i+\ell=i+j+\lambda_{1}(n-1)$, implying that $\ell-j=\lambda_{1}(n-1)$. However, $2\leq j<\ell\leq n-1$, which gives $n-3\geq \lambda_{1}(n-1)$, a contradiction.\,\,\,{\tiny$\blacksquare$}

\noindent\textbf{Case 2.} $j=\ell<n$ and either $i<k$ or $k<i$.

Notwithstanding whether $i<k$ or $k<i$, we have $i+j\equiv k+\ell \equiv k+j\pmod{n-1}$, implying that $i\equiv k\pmod{n-1}$, a contradiction.\,\,\,{\tiny$\blacksquare$} \medskip

%Without loss of generality, let $i<k$. Then, $i+j\equiv k+\ell \equiv k+j\pmod{n-1}$. Thus, for some integer $\lambda_{2}\geq 1$, we have $k+j=i+j+\lambda_{2}(n-1)$, implying that $k-i=\lambda_{2}(n-1)$. However, $1\leq i<k\leq n-2$, which gives $n-3\geq \lambda_{2}(n-1)$, a contradiction.\,\,\,{\tiny$\blacksquare$}

\noindent\textbf{Case 3.} Either $j=k$ or $i=\ell$.

Without loss of generality, let $j=k$. Then, $i<\ell\leq n$ and $j<n$. Assuming first that $\ell<n$, we get that $i+j\equiv k+\ell\equiv j+\ell\pmod{n-1}$, implying that $i\equiv j\pmod{n-1}$, a contradiction. Thus, $\ell=n$, and $i+j\equiv 2k\equiv 2j\pmod{n-1}$, implying that $i\equiv j\pmod{n-1}$, a contradiction once again.\,\,\,{\tiny$\blacksquare$} \medskip

%Without loss of generality, let $j=k$. Then, $i<\ell\leq n$ and $j<n$. Assuming first that $\ell<n$, we get that $i+j\equiv k+\ell\equiv j+\ell\pmod{n-1}$. Thus, for some integer $\lambda_{3}\geq 1$, we have $j+\ell=i+j+\lambda_{3}(n-1)$, implying that $\ell-i=\lambda_{3}(n-1)$. However, $1\leq i<\ell\leq n-1$, which gives $n-2\geq \lambda_{3}(n-1)$, a contradiction. Thus, $\ell=n$, and $i+j\equiv 2k\equiv 2j\pmod{n-1}$. Thus, for some integer $\lambda_{3}'\geq 1$, we have $2j=i+j+\lambda_{3}'(n-1)$, implying that $j-i=\lambda_{3}'(n-1)$. However, $1\leq i<j\leq n-1$, which gives $n-2\geq \lambda_{3}'(n-1)$, a contradiction.\,\,\,{\tiny$\blacksquare$}

\noindent\textbf{Case 4.} $j=\ell=n$ and either $i<k$ or $k<i$.

Notwithstanding whether $i<k$ or $k<i$, we have $2i\equiv 2k\pmod{n-1}$, and since $n$ is even, this implies that $i\equiv k\pmod{n-1}$, a contradiction.\,\,\,{\tiny$\blacksquare$} \medskip

%Without loss of generality, let $i<k$. Then, $2i\equiv 2k\pmod{n-1}$. Thus, $2k=2i+\lambda_{4}(n-1)$, for some integer $\lambda_{4}\geq 1$. Since, $1\leq i<k\leq n-1$, we have $2(n-2)\geq 2(k-i)=\lambda_{4}(n-1)$, implying that $\lambda_{4}$ must be equal to 1. Therefore, $2(k-i)=n-1$, implying that $n-1$ is even, a contradiction.\,\,\,{\tiny$\blacksquare$}

\noindent\textbf{Case 5.} $i=k$ and either $j<\ell=n$ or $\ell<j=n$.

Without loss of generality, let $j<\ell=n$. Then, $i+j\equiv 2k\equiv 2i\pmod{n-1}$, implying that $j\equiv i\pmod{n-1}$, a contradiction.\,\,\,{\tiny$\blacksquare$} \medskip

%Without loss of generality, let $j<\ell=n$. Then, $i+j\equiv 2k\equiv 2i\pmod{n-1}$. Thus, $i+j=2i+\lambda_{5}(n-1)$, for some integer $\lambda_{5}\geq 1$ because $i<j$. However this implies that $n-1\geq j=i+\lambda_{5}(n-1)>n-1$, a contradiction.\,\,\,{\tiny$\blacksquare$}

Hence $c$ is a proper $(n-1)$-colouring of the shared vertices of $\mathcal{G}_{n}$.
\end{proof}

Since every defining $n$-clique of $\mathcal{G}_{n}$ contains $n-1$ shared vertices, it can be easily seen that the proper $(n-1)$-vertex-colouring given in Lemma \ref{lemma main} can be extended to a proper $n$-vertex-colouring of $\mathcal{G}_{n}$, by assigning a unique new colour to all the vertices in $V(\mathcal{G}_{n})-\mathcal{V}_{n}$, implying that $\chi(\mathcal{G}_{n})=n$. 

\begin{example}\label{Example EFL mod}
Here, we use the colouring explained in Lemma \ref{lemma main} to obtain a proper $9$-colouring of the shared vertices of $\mathcal{G}_{10}$. Note that addition is taken modulo $9$.

\begin{table}[H]
\centering
%\caption{}
%\label{my-label}
\begin{tabular}{ccccccccc}
\addlinespace[-\aboverulesep] 
\cmidrule[\heavyrulewidth]{1-9}%\toprule
\multicolumn{9}{c}{\emph{Colours}}                                                                                                           \\ 
1               & 2      & 3               & 4               & 5               & 6               & 7               & 8      &9        \\ 
\cmidrule{1-9}
%\multirow{5}[0]{*}{$\mathcal{V}^{10}$}
(1,9)           & (5,6) & (1,2)           & (1,3)           & (1,4)           & (1,5)           & (1,6)           & (1,7)  & (1,8)                    \\
(2,8)           & (2,9)  & (3,9)           & (4,9)           & (2,3)           & (2,4)           & (2,5)           & (2,6)    &(2,7)        \\
(3,7)           & (3,8)  & (4,8)           & (5,8)           & (5,9)           & (6,9)           & (3,4)           & (3,5)   & (3,6)         \\
(4,6)           & (4,7)  & (5,7)           & (6,7)           & (6,8)           & (7,8)           & (7,9)           & (8,9)  &(4,5)                  \\
\textbf{(5,10)} & \textbf{(1,10)}  & \textbf{(6,10)} & \textbf{(2,10)} & \textbf{(7,10)} & \textbf{(3,10)} & \textbf{(8,10)} & \textbf{(4,10)}&\textbf{(9,10)}  \\ \cmidrule[\heavyrulewidth]{1-9}
\end{tabular}
\caption{A proper $9$-colouring of the shared vertices in $\mathcal{G}_{10}$}
\end{table}

Colour $10$ is then given to the vertices in $V(\mathcal{G}_{10})-\mathcal{V}_{10}$.
\end{example}

Note that if we remove the shared vertices marked bold, the above table reduces to a proper $9$-colouring of the shared vertices of $\mathcal{G}_{9}$. In fact, let $n\geq 3$ be an odd integer. The above idea can be used for a proper $n$-colouring of the shared vertices of $\mathcal{G}_{n}$. More precisely, by the algorithm given in Lemma \ref{lemma main}, we know that the colouring 
\begin{linenomath}
$$c\big((i,j)\big)\equiv\left\{
\begin{array}{rl}
(i+j)\pmod{n} & $if $j<n+1,\\
2i\pmod{n} & $if $j=n+1,
\end{array}\right.
$$
\end{linenomath}
provides a proper $n$-colouring of the shared vertices of $\mathcal{G}_{n+1}$, since $n+1$ is even. Restricting the above colouring to those shared vertices $(i,j)$ for which $j\leq n$, we get that $c\big((i,j)\big)\equiv i+j\pmod{n}$ is a proper $n$-colouring of the shared vertices of $\mathcal{G}_{n}$. As before, this gives a proper $n$-colouring of the vertices in $\mathcal{G}_{n}$, implying that $\chi(\mathcal{G}_{n})=n$. The next proposition follows immediately from the above arguments.

\begin{proposition}\label{prop EFL algorithm1} If $n$ is even, then 
\begin{linenomath}
$$c\big((i,j)\big)\equiv\left\{
\begin{array}{rl}
i+j\pmod{n-1} & $if $j<n\\
2i\pmod{n-1} & $if $j=n,
\end{array}\right.
$$
\end{linenomath}
is a proper $(n-1)$-colouring of the shared vertices of $\mathcal{G}_{n}$. Otherwise, if $n$ is odd, $c\big((i,j)\big)\equiv i+j\pmod{n}$ is a proper $n$-colouring of the shared vertices of $\mathcal{G}_{n}$. \qed
\end{proposition}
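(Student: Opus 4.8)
The even case is nothing other than Lemma \ref{lemma main} restated, so no new work is needed there; the whole plan is to deduce the odd case from the even one. Assume $n\geq 3$ is odd, so that $n+1$ is even. First I would apply Lemma \ref{lemma main} with $n$ replaced by $n+1$: this gives a proper $n$-colouring of the shared vertices of $\mathcal{G}_{n+1}$, namely the map sending $(i,j)$ to $i+j\pmod n$ when $j<n+1$ and to $2i\pmod n$ when $j=n+1$.

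The second step is to recognise the shared-vertex graph of $\mathcal{G}_n$ inside that of $\mathcal{G}_{n+1}$. The shared vertices of $\mathcal{G}_{n+1}$ are indexed by the pairs $(i,j)$ with $1\leq i<j\leq n+1$, and two of them are adjacent precisely when the two pairs have an index in common, since that is exactly the condition for both to lie in a common defining clique. Restricting to the shared vertices $(i,j)$ with $j\leq n$ — equivalently, deleting the $n$ vertices of the form $(i,n+1)$ — leaves an induced subgraph whose adjacency relation is still ``the index pairs meet'', which is exactly the adjacency relation on $\mathcal{V}_n$. Hence the subgraph of $\mathcal{G}_{n+1}$ induced on $\{(i,j):1\leq i<j\leq n\}$ is isomorphic to the subgraph of $\mathcal{G}_n$ induced on $\mathcal{V}_n$, under the obvious identification of index pairs.

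Finally, since the restriction of a proper colouring to an induced subgraph is again proper, restricting the colouring of $\mathcal{G}_{n+1}$ from the first step to the vertices with $j\leq n$ gives a proper $n$-colouring of the shared vertices of $\mathcal{G}_n$; on those vertices the formula is simply $c\big((i,j)\big)\equiv i+j\pmod n$, which is the claimed colouring. I do not expect any genuine obstacle here: the only point deserving a line of justification is the identification in the second step, i.e.\ that adjacency of shared vertices depends only on their index pairs and therefore survives the deletion of the vertices $(i,n+1)$ intact — and this is immediate from the definition of the defining $n$-cliques. (As noted in the text, this is precisely the phenomenon illustrated by deleting the bold entries in the table of Example \ref{Example EFL mod}.)
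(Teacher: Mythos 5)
Your proposal is correct and is essentially identical to the paper's own argument: the even case is Lemma \ref{lemma main} verbatim, and the odd case is obtained by applying that lemma to $\mathcal{G}_{n+1}$ (with $n+1$ even) and restricting the resulting proper $n$-colouring to the shared vertices $(i,j)$ with $j\leq n$, exactly as illustrated by deleting the bold entries in Example \ref{Example EFL mod}. Your extra remark that adjacency of shared vertices depends only on whether the index pairs meet, so the restriction is an induced subgraph of $\mathcal{G}_n$'s shared-vertex graph, just makes explicit a point the paper leaves implicit.
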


Now, let $\mathcal{G}\in\mathbb{EFL}_{n}$ for some integer $n$, such that every shared vertex belongs to exactly two defining $n$-cliques. Let $\mathcal{V}$ be the set of all the shared vertices of $\mathcal{G}$. Then, $\mathcal{V}\subseteq \mathcal{V}_{n}$, and so $\vert \mathcal{V}\vert \leq \binom{n}{2}$. Consider the case when $\vert \mathcal{V}\vert < \binom{n}{2}$. The colouring $c$ given in Proposition \ref{prop EFL algorithm1} is also a proper colouring of the shared vertices of $\mathcal{G}$ using at most $n$ colours, since every shared vertex of $\mathcal{G}$ corresponds to a shared vertex of $\mathcal{G}_{n}$. Hence, the algorithm described in Proposition \ref{prop EFL algorithm1} can be used to provide a proper colouring of the shared vertices of $\mathcal{G}$ using at most $n$ colours. As before, this can then be extended to a proper $n$-colouring of all the vertices of $\mathcal{G}$. We can thus summarise the above results in the following theorem.

\begin{theorem}\label{Theorem EFL algorithm2} Let $\mathcal{G}\in\mathbb{EFL}_{n}$ such that every shared vertex of $\mathcal{G}$ belongs to exactly two defining $n$-cliques of $\mathcal{G}$. 
\begin{enumerate}[(i)]
\item If $n$ is even, then 
\begin{linenomath}
$$c\big((i,j)\big)\equiv\left\{
\begin{array}{rl}
i+j\pmod{n-1} & $if $j<n\\
2i\pmod{n-1} & $if $j=n.
\end{array}\right.
$$
\end{linenomath}
is a proper $(n-1)$-colouring of the shared vertices of $\mathcal{G}$. 
\item If $n$ is odd, then $c\big((i,j)\big)\equiv i+j\pmod{n}$ is a proper $n$-colouring of the shared vertices of $\mathcal{G}$. 
\item $\chi(\mathcal{G})=n$.
\end{enumerate}
\end{theorem}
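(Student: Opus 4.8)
The plan is to assemble Theorem~\ref{Theorem EFL algorithm2} directly from the machinery already built up, treating parts (i), (ii) and (iii) in turn. For part (i), let $\mathcal{G}\in\mathbb{EFL}_n$ with $n$ even and with every shared vertex lying in exactly two defining $n$-cliques. First I would label the defining $n$-cliques $Q_1,\ldots,Q_n$ and, for each shared vertex lying in $Q_i$ and $Q_j$ with $i<j$, record it as the ordered pair $(i,j)$; this realises the shared vertex set $\mathcal{V}$ of $\mathcal{G}$ as a subset of the shared vertex set $\mathcal{V}_n$ of the extremal graph $\mathcal{G}_n$. The key observation is that adjacency in $\mathcal{G}$ restricted to $\mathcal{V}$ is inherited from $\mathcal{G}_n$: two shared vertices $(i,j)$ and $(k,\ell)$ of $\mathcal{G}$ are adjacent only if they lie in a common defining $n$-clique, which forces $\{i,j\}\cap\{k,\ell\}\neq\emptyset$, exactly the situation already analysed. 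Hence the restriction to $\mathcal{V}$ of the colouring $c$ from Lemma~\ref{lemma main} (equivalently Proposition~\ref{prop EFL algorithm1}) is still proper, which is precisely the assertion of part (i).

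For part (ii), the argument is identical with the odd-$n$ colouring $c\big((i,j)\big)\equiv i+j\pmod n$ supplied by Proposition~\ref{prop EFL algorithm1}: again $\mathcal{V}\subseteq\mathcal{V}_n$ and adjacency is inherited, so properness on $\mathcal{G}_n$ yields properness on $\mathcal{G}$. Part (iii) then follows by the extension argument already rehearsed in the introduction and after Lemma~\ref{lemma main}. In either parity, each defining $n$-clique $Q_i$ contains at most $n-1$ shared vertices, all receiving distinct colours from the set $\mathcal{C}_i\subseteq\{1,\ldots,n\}$ used on its shared vertices; since $Q_i$ then has exactly $n-|\mathcal{C}_i|$ unshared vertices, one assigns them the $n-|\mathcal{C}_i|$ colours of $\{1,\ldots,n\}\setminus\mathcal{C}_i$ in any bijective fashion. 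An unshared vertex belongs to a unique clique, so no conflict is created between distinct cliques, and the result is a proper $n$-colouring of all of $V(\mathcal{G})$. Together with the trivial lower bound $\chi(\mathcal{G})\geq n$ coming from the presence of an $n$-clique, this gives $\chi(\mathcal{G})=n$.

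There is really no serious obstacle here: the theorem is a packaging of Lemma~\ref{lemma main}, Proposition~\ref{prop EFL algorithm1} and the standard extension step. The one point that warrants a sentence of care is the claim that adjacency among shared vertices in an arbitrary $\mathcal{G}\in\mathbb{EFL}_n$ is a sub-relation of adjacency in $\mathcal{G}_n$ — i.e.\ that $\mathcal{G}[\mathcal{V}]$ is a subgraph of $\mathcal{G}_n[\mathcal{V}_n]$ under the identification of shared vertices with pairs — so that ``proper on $\mathcal{G}_n$'' genuinely implies ``proper on $\mathcal{G}$''. This is immediate from the definition of $\mathbb{EFL}_n$: any edge between two shared vertices must lie inside some defining $n$-clique $Q_m$, whence both pairs contain the index $m$, and two such pairs are always adjacent in $\mathcal{G}_n$ as well. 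Once this is noted, (i) and (ii) are one line each and (iii) is the extension paragraph, so I would keep the write-up short.
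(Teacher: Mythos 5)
Your proposal is correct and follows essentially the same route as the paper: identify $\mathcal{V}$ with a subset of $\mathcal{V}_{n}$, restrict the colouring of Proposition~\ref{prop EFL algorithm1} (which remains proper since adjacency among shared vertices of $\mathcal{G}$ is inherited from $\mathcal{G}_{n}$), and then extend to the unshared vertices clique by clique to conclude $\chi(\mathcal{G})=n$. Your explicit justification that $\mathcal{G}[\mathcal{V}]$ is a subgraph of $\mathcal{G}_{n}[\mathcal{V}_{n}]$ is a welcome touch of care that the paper leaves implicit.
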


\section{Clique-decompositions and edge-colourings}\label{section last}

We would like to end this note by recalling that, as also indicated above in Section \ref{section intro}, the EFL Conjecture can be restated in a very simple and intuitive way in terms of clique-decompositions and edge-colourings of the complete graph. As far as we know this re-statement of the EFL Conjecture was first suggested in \cite{efl0}. Let $H$ be a simple graph on $n$ vertices. A \emph{clique-decomposition} of $H$ is a collection $\mathcal{D}=\{\,D_{1},\ldots, D_{k}\,\}$ of subgraphs of $H$, such that each $D_{i}$ is a clique, and each edge of $H$ belongs to exactly one clique from $\mathcal{D}$. We denote a clique-decomposition $\mathcal{D}$ of a graph $H$ as $(H,\mathcal{D})$. A \emph{$n$-colouring of $(H,\mathcal{D})$} is an assignment of $n$ colours to the elements of $\mathcal{D}$ such that if $V(D_{i})\cap V(D_{j})\neq\emptyset$, for some $i\neq j$, then the colours of $D_{i}$ and $D_{j}$ are distinct. One can easily visualise this as an edge-colouring (not necessarily proper) of $H$ in which the edges in each $D_{i}$ are monochromatic, and if for some $i\neq j$, $V(D_{i})\cap V(D_{j})\neq\emptyset$, then the edges of $D_{i}$ have a different colour than the edges of $D_{j}$. 

Let $G\in \mathbb{EFL}_{n}$ (not necessarily with every shared vertex belonging to exactly two defining $n$-cliques), and let $H$ be the graph on $n$ vertices, say $v_{1},\ldots, v_{n}$, with edge-set $\{\,v_{i}v_{j}\mid V(Q_{i})\cap V(Q_{j})\neq\emptyset, \textrm{ for } i\neq j\,\}$. We consider the following clique-decomposition of $H$. Let $\{\,u_{1}, \ldots, u_{k}\,\}$ be the set of shared vertices of $G$, and, for every $t\in\{\,1,\ldots, k\,\}$, let $\mathcal{I}_{t}$ be the set of indices of all the defining $n$-cliques containing $u_{t}$. Furthermore, for each $t\in\{\,1,\ldots, k\,\}$, we let $D_{t}$ be the subgraph of $H$ induced by the vertices $\{\,v_{i}\mid i\in\mathcal{I}_{t}\,\}$. Consequently, the set $\mathcal{D}=\{\,D_{1}, \ldots, D_{k}\,\}$ is a clique-decomposition of $H$. If there exists a $n$-colouring of $(H,\mathcal{D})$, then there exists a vertex colouring of the shared vertices of $G$ using $n$ colours, implying that $\chi(G)=n$. 

In general, one can deduce that every graph in $\mathbb{EFL}_{n}$ gives rise to a simple graph on $n$ vertices with a clique-decomposition, and by a similar argument, every simple graph on $n$ vertices with a clique-decomposition corresponds to some graph in $\mathbb{EFL}_{n}$. The case considered in Section \ref{section main} corresponds to the case when every clique in $\mathcal{D}$ is a $2$-clique. Moreover, as in the previous section, if one can show that for every possible clique-decomposition $\mathcal{D}$ of $K_{n}$, $(K_{n},\mathcal{D})$ admits a $n$-colouring, then the EFL Conjecture would be true. Although a proof of the conjecture for all sufficiently large values of $n$ was recently announced \cite{kuhn}, we still believe that such a problem deserves to be studied further and solved for the other instances as well, as this could give insights into related areas such as clique-decompositions and edge-colourings of graphs, which have been already studied such as in \cite{clique1,clique2}. In this sense, we suggest the following problem which we think could be a possible way forward.

\begin{problem}\label{problem}
Let $\mathcal{D}$ be a clique-decomposition of $K_{n}$, such that every clique in $\mathcal{D}$ is either a 2-clique or a $r$-clique, for some fixed $r\in\{\,3,\ldots, n-1\,\}$. Determine whether $(K_{n},\mathcal{D})$ has a $n$-colouring, and, if in the affirmative, whether an efficient algorithm to find such a $n$-colouring exists.
\end{problem}

%\begin{acknowledgements}
%  Acknowledgements to persons or institutions should be put in a
%  separate environment before the beginning of the bibliography, with
%  the following syntax:
%\begin{verbatim}
%\begin{acknowledgements}
%..........
%\end{acknowledgements}
%\end{verbatim}
%\end{acknowledgements}

\end{document}